\documentclass[12pt]{article}

\usepackage{amsmath, epsfig, cite}
\usepackage{amssymb}
\usepackage{amsfonts}
\usepackage{latexsym}
\usepackage{float}
\usepackage{color}
\usepackage{booktabs}
\usepackage{graphicx}
\usepackage{caption}
\usepackage{mathtools}
\usepackage[colorlinks=true,linkcolor=blue,citecolor=blue,urlcolor=blue]{hyperref}

\newtheorem{thm}{Theorem}

\newtheorem{lem}{Lemma}[section]

\newcommand{\pf}{\noindent{\it Proof.} }

\def\Z{{\mathbb Z}}
\def\Q{{\mathbb Q}}

\numberwithin{equation}{section}
\allowdisplaybreaks

\newcommand{\qed}{\hfill$\square$\medskip}

\begin{document}

\begin{center}
{\Large\bf Two $q$-congruences from Jackson's ${}_8\phi_7$ summation and Andrews' ${}_4\phi_3$ summation}
\end{center}

\vskip 2mm \centerline{Ji-Cai Liu and Qing-Yuan Tao}
\begin{center}
{\footnotesize Department of Mathematics, Wenzhou University, Wenzhou 325035, PR China\\[5pt]
{\tt jcliu2016@gmail.com, 15514228254@163.com} \\[10pt]
}
\end{center}

\vskip 0.7cm \noindent{\bf Abstract.}
We prove two $q$-congruence conjectures of Guo on truncated basic hypergeometric series.  The first result strengthens a congruence obtained from Jackson's terminating very-well-poised ${}_8\phi_7$ summation from the modulus $\Phi_n(q)^4$ to the modulus $\Phi_n(q)^5$ when $n\equiv3\pmod 5$.  The second result proves a cyclotomic congruence modulo $\Phi_n(q)$ when $n\equiv1\pmod4$, by a specialization of Andrews' terminating ${}_4\phi_3$ summation.

\vskip 3mm \noindent {\it Keywords}: $q$-supercongruences; cyclotomic polynomials; Jackson's ${}_8\phi_7$ summation; Andrews' ${}_4\phi_3$ summation.

\vskip 2mm
\noindent{\it MR Subject Classifications}: 33D15; 11A07; 11B65.

\section{Introduction}

The study of supercongruences for truncated hypergeometric series has been closely connected with arithmetic geometry, $p$-adic special functions, and Gaussian hypergeometric functions.  Deines, Fuselier, Long, Swisher and Tu~\cite{DeinesEtAl} related certain truncated hypergeometric series to higher dimensional analogues of Legendre curves, and many subsequent works have investigated stronger congruences and their $q$-analogues (see, for example, \cite{Guo2023,GuoZudilin,PanTaurasoWang,WangPan}).  In the $q$-setting, congruences modulo powers of cyclotomic polynomials often refine classical congruences modulo powers of primes after the specialization $q\to1$.

We use the standard notation
\begin{equation*}
(a;q)_k=(1-a)(1-aq)\cdots(1-aq^{k-1}),\qquad
(a_1,\ldots,a_r;q)_k=\prod_{i=1}^r(a_i;q)_k,
\end{equation*}
and
\begin{equation*}
[N]=[N]_q=\frac{1-q^N}{1-q}.
\end{equation*}
The $n$-th cyclotomic polynomial is defined by
\[
  \Phi_n(q)=\prod_{\substack{1\le j\le n\\ \gcd(j,n)=1}}(q-\zeta^j),
\]
where $\zeta$ is a primitive $n$-th root of unity.
If $F(q)$ and $G(q)$ are rational functions over $\Z[q]$, then
\begin{equation*}
F(q)\equiv G(q)\pmod{\Phi_n(q)^r}
\end{equation*}
means that $F(q)-G(q)$ can be written as $U(q)/V(q)$, where
\begin{equation*}
U(q),V(q)\in\Z[q],\qquad \gcd(V(q),\Phi_n(q))=1,\qquad \Phi_n(q)^r\mid U(q).
\end{equation*}
The same convention is used for rational functions involving auxiliary parameters.

Guo and Zudilin~\cite{GuoZudilin} introduced the method of creative microscoping, which has become an effective tool for proving $q$-supercongruences.  Guo~\cite{Guo2025} subsequently used Jackson's terminating very-well-poised ${}_8\phi_7$ summation and creative microscoping to prove several new congruences, and left two conjectures open.  More precisely, for $n\equiv3\pmod 5$ and $m=(4n-2)/5$, Guo proved the $q$-congruence
\[
\sum_{k=0}^{n-1}
\frac{(1+q^{5k+1})(q^2;q^5)_k^5}{(1+q)(q^5;q^5)_k^5}q^{5k}
\equiv
\frac{(q^3,q^4,q^4,q^7;q^5)_m}{(q^2,q^5,q^5,q^6;q^5)_m}
\left(1+\frac{[4n]^2q^{-n}}{[3n]^2}\right)
\pmod{\Phi_n(q)^4}.
\]
Guo~\cite[Conjecture 6.1]{Guo2025} asserts that the same congruence remains valid modulo $\Phi_n(q)^5$.  The second conjecture \cite[Conjecture 6.2]{Guo2025} is a congruence modulo $\Phi_n(q)$ related to the truncated hypergeometric series
\[
{}_4F_3\!\binom{1/4,1/4,1/4,1/4}{1,1,1}.
\]

The purpose of this paper is to prove both conjectures \cite[Conjectures 6.1 and 6.2]{Guo2025}.
\begin{thm}\label{thm:conj61}
Let $n$ be a positive integer with $n\equiv3\pmod 5$.
Then
\begin{align}\label{eq:theorem-one}
&\sum_{k=0}^{n-1}
\frac{(1+q^{5k+1})(q^2;q^5)_k^5}{(1+q)(q^5;q^5)_k^5}q^{5k} \notag\\
&\quad\equiv
\frac{(q^3,q^4,q^4,q^7;q^5)_{(4n-2)/5}}{(q^2,q^5,q^5,q^6;q^5)_{(4n-2)/5}}
\left(1+\frac{[4n]^2q^{-n}}{[3n]^2}\right)
\pmod{\Phi_n(q)^5}.
\end{align}
\end{thm}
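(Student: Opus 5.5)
We follow Guo's creative microscoping, keeping one extra power of $\Phi_n(q)$ by using a three-parameter family in place of a two-parameter one. Write $d=5$, $r=2$, $m=(4n-2)/5$; then $(q^2;q^5)_k$ has a factor $1-q^{5j+2}$ with $5j+2=4n$ exactly at $j=m$. Hence $(q^2;q^5)_k\equiv0\pmod{\Phi_n(q)}$ for $k>m$, while the denominator $(q^5;q^5)_k$ is coprime to $\Phi_n(q)$ for $k<n$. So, modulo $\Phi_n(q)^5$, the left-hand side of \eqref{eq:theorem-one} equals the sum truncated at $k=m$. Moreover, the terms with $m<k\le n-1$ carry the fifth power $(q^2;q^5)_k^5$, so they vanish modulo $\Phi_n(q)^5$ even before any deformation.

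\textbf{Step 1: the parametric identity.} Take Jackson's terminating ${}_8\phi_7$ summation with base $q^5$, $a=q^2$ (the factor $(1+q^{5k+1})/(1+q)$ is the very-well-poised factor $(1-aq^{10k})/(1-a)$ divided by $(1-q^{5k+1})/(1-q)$, so we actually use $a=q^2$ with $\sqrt a$ handled by $q^{5k+1}$ and $-q^{5k+1}$), and choose the five numerator parameters $b,c,d,e,q^{-5m}$ as $q^{2}a_1, q^2/a_1, q^{2}b_1,q^2/b_1$ (or similar), with $q^{-5m}$ the terminating one. The balancing condition is met since $n\equiv3\pmod5$. This gives a closed form $\Omega(a_1,b_1)$ for the truncated sum with $(aq^2;q^5)_k(q^2/a;q^5)_k(bq^2;q^5)_k(q^2/b;q^5)_k$ in place of $(q^2;q^5)_k^4$. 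The closed form is a ratio of $q$-shifted factorials of length $m$.

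\textbf{Step 2: three specializations.} The idea is to show that the difference between the deformed sum $S(a,b)$ and an explicit deformed right-hand side $R(a,b)$ vanishes modulo $(1-aq^{4n})(a-q^{4n})$ and modulo $(1-bq^{4n})(b-q^{4n})$ (coming from $a=q^{\pm4n}$, $b=q^{\pm4n}$, where the sum terminates naturally at $k=m$ and Jackson applies), and also modulo $\Phi_n(q)$ from the truncation. By the Chinese remainder theorem in $\Z[q,a,b]$, $S-R$ is divisible by $\Phi_n(q)(1-aq^{4n})(a-q^{4n})(1-bq^{4n})(b-q^{4n})$. Setting $a=b=1$ then yields modulus $\Phi_n(q)^5$.

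\textbf{Step 3: the main obstacle.} The main obstacle is constructing $R(a,b)$ so that it agrees with Jackson's evaluation at $a=q^{\pm4n}$ for all $b$ (and symmetrically), and reduces at $a=b=1$ to the stated right side with factor $1+[4n]^2q^{-n}/[3n]^2$. I would first try $R$ as a symmetric Lagrange-type interpolation: the product $(q^3,q^4,q^4,q^7;q^5)_m/(q^2,q^5,q^5,q^6;q^5)_m$ arises from one specialization, and the correction term $[4n]^2q^{-n}/[3n]^2$ arises from the interpolation coefficients $(1-aq^{3n})(a-q^{3n})/((1-q^{4n}\cdot)\cdots)$ evaluated at $a=1$. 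Guo's modulus-$\Phi_n^4$ proof presumably already uses two such factors; the extra factor requires checking that the $\Phi_n(q)$ coming from the truncated tail is coprime to the other four factors and that the Jackson evaluations at $a=q^{\pm 4n}$ have no $\Phi_n$ in their denominators. I expect this verification, together with the exact form of the interpolating $R$, to be the delicate computation.
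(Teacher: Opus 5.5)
\textbf{There is a genuine gap in Step 2.} You assert that $S(a,b)-R(a,b)$ vanishes modulo $\Phi_n(q)$ ``from the truncation,'' and this is where the argument breaks. Write $T_k$ for the $k$-th deformed summand. Truncation only shows that, modulo $\Phi_n(q)$, the deformed sum equals $\sum_{k=0}^{m}T_k$, since the terms with $k>m$ die through the single undeformed factor $(q^2;q^5)_k$. It says nothing about this finite sum itself.

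The Jackson evaluations $R_x=\frac{(xq^4,q^4/x,q^3,q^7;q^5)_m}{(xq^5,q^5/x,q^2,q^6;q^5)_m}$ each contain $(1-q^n)(1-q^{4n})/(1-q^{2n})$. So any interpolant $R(a,b)$ is $\equiv0\pmod{\Phi_n(q)}$, and what you actually need is $\sum_{k=0}^{m}T_k\equiv0\pmod{\Phi_n(q)}$ for indeterminate $a,b$. This is a nontrivial root-of-unity identity, and it is exactly the ingredient missing from Guo's proof. Without it your CRT step only gives modulus $P_aP_b$, which at $a=b=1$ reproduces Guo's $\Phi_n(q)^4$ result, not $\Phi_n(q)^5$.

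The paper proves this vanishing in two moves:
\begin{itemize}
\item With $Q=q^5$ one has $q^2Q^m\equiv1$. This gives the reflection $T_{m-k}=T_k$ and reduces the claim to $\sum_{k<h}T_k+\tfrac12T_h=0$, where $h=m/2=(2n-1)/5$.
\item This half-sum comes from Jackson's summation with base $Q$, $N=h$, a free leading parameter $\alpha$, parameters $aq^2,q^2/a,bq^2$, and the fourth parameter fixed by balancing, in the limit $\alpha\to q^2=Q^{-2h}$. The $k=h$ term is a $0/0$ that regularizes to $\tfrac12T_h$, and the right side vanishes because $(q^3;Q)_h$ contains $1-q^n$.
\end{itemize}

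\textbf{Step 3 is left open, and your guess about it points the wrong way.} You cannot simply put $a=b=1$, because the interpolation coefficients have denominators $(a-b)(1-ab)$. The paper's route is:
\begin{itemize}
\item Set $b=1$ first. This gives a congruence modulo $\Phi_n(q)^3P_a$ whose right side has an apparent double pole at $a=1$.
\item Compute the limit $a\to1$ from the second logarithmic derivative of $R_a$ at $a=1$. The right side becomes $R_1\bigl(1+(2-q^{4n})[4n]^2H_m(q)\bigr)$, where $H_m(q)=\sum_{j=1}^{m}\bigl(q^{5j-1}/[5j-1]^2-q^{5j}/[5j]^2\bigr)$.
\item Use $\Phi_n(q)^3\mid R_1$ to replace $2-q^{4n}$ by $q^{-4n}$ and to discard the regular part of $H_m$.
\end{itemize}
The correction $[4n]^2q^{-n}/[3n]^2$ then comes from the unique singular term $5j-1=3n$ of $H_m$. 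It does not come from interpolation factors built on $q^{3n}$; the interpolation nodes are $a=q^{\pm4n}$.

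\textbf{Step 1 overstates what Jackson's summation gives.} There is no closed form for generic $a,b$. A closed form exists only when $a$ or $b$ equals $q^{\pm4n}$. In that case the sixth numerator parameter is $q$ (with $q^6$ below), coming from $\frac{1+q^{5k+1}}{1+q}=\frac{1-q^2Q^{2k}}{1-q^2}\cdot\frac{1-q}{1-q^{5k+1}}$.
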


\begin{thm}\label{thm:conj62}
Let $n>1$ be an integer with $n\equiv1\pmod4$.  Then
\begin{equation}\label{eq:theorem-two}
\sum_{k=0}^{n-1}\frac{(q;q^4)_k^4}{(q^4;q^4)_k^4}q^{4k}
\equiv
q^{(n-1)/4}
\frac{(q;q^2)_{(n-1)/4}^2(q^2;q^4)_{(n-1)/4}}
{(q^2;q^2)_{(n-1)/4}^2(q^4;q^4)_{(n-1)/4}}
\pmod{\Phi_n(q)}.
\end{equation}
\end{thm}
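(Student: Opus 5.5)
The plan is to reduce the left-hand side of \eqref{eq:theorem-two} modulo $\Phi_n(q)$ to a \emph{terminating} basic hypergeometric sum, and then to recognize that sum as a specialization of Andrews' terminating ${}_4\phi_3$ summation. Throughout, put $m=(n-1)/4$, and work in the cyclotomic field, with $q$ a primitive $n$-th root of unity $\zeta$. This is legitimate because a congruence modulo $\Phi_n(q)$ between rational functions whose denominators are coprime to $\Phi_n(q)$ is the same as an equality after substituting $q=\zeta$.

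\emph{Step 1 (truncation).} Since $n\equiv1\pmod 4$, the factor $1-q^{1+4m}=1-q^n$ occurs in $(q;q^4)_k$ for every $k>m$, so these numerators are divisible by $\Phi_n(q)$. For $k\le n-1$ the denominator $(q^4;q^4)_k$ is a product of factors $1-q^{4j}$ with $1\le j<n$. Since $\gcd(4,n)=1$, none of these factors vanishes at $\zeta$, so the denominator is coprime to $\Phi_n(q)$. Hence modulo $\Phi_n(q)$ the sum equals $\sum_{k=0}^{m}$. Moreover $q\equiv q^{1-n}=q^{-4m}$, so $(q;q^4)_k\equiv(q^{-4m};q^4)_k$, and the sum becomes a genuinely terminating series in base $q^4$. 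In the same way, the right-hand side is nonzero and well defined at $q=\zeta$, since its denominators involve only $1-q^{j}$ with $j<n$.

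\emph{Step 2 (matching with Andrews' formula).} Recall Andrews' identity: for $N$ even,
\[
{}_4\phi_3\!\left[\begin{matrix}q^{-N},\,aq^N,\,b,\,-b\\ \sqrt{aq},\,-\sqrt{aq},\,b^2\end{matrix};q,q\right]
=b^N\frac{(q,aq/b^2;q^2)_{N/2}}{(aq,b^2q;q^2)_{N/2}},
\]
and the sum vanishes for $N$ odd. The shape of the target, namely $q^{m}$ times $(q;q^2)_m^2(q^2;q^4)_m$ over $(q^2;q^2)_m^2(q^4;q^4)_m$, strongly suggests $N=2m$ with Andrews' base replaced by $q^2$, so that the $q^2$-steps of his right side become our $q^4$-steps. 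I would then choose $a$ and $b$ as suitable powers of $\zeta$; square roots are available because $n$ is odd, for instance $q^{1/2}=\zeta^{(n+1)/2}$. The aim is for each upper and lower parameter, reduced modulo $\zeta^n=1$, to reproduce four copies of $(q;q^4)_k/(q^4;q^4)_k$. The useful tool here is that pairs $(x,-x;q^2)_k=(x^2;q^4)_k$ convert the $\pm$ pairs in Andrews' series into base-$q^4$ factors. Having fixed $a$ and $b$, I would verify that the denominator parameters and the factor $b^N$ specialize to exactly $q^{m}$ and the stated quotient.

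\emph{Main obstacle.} The matching in Step 2 is the hard part. The summand uses base $q^4$, while Andrews' series naturally pairs $\pm$ parameters. At an odd root of unity $-1$ is not a power of $\zeta$, so the lower pair $\pm\sqrt{aq}$ cannot both be made congruent to $q^4$ directly. If no direct choice works, I would try two fallbacks. First, perturb one parameter, in the style of creative microscoping: replace $q^{-4m}$ by $aq^{-4m}$ or similar, apply Andrews' sum as an exact identity in $a$, and then let $a\to1$ modulo $\Phi_n(q)$. Second, rewrite the sum so that odd-index terms cancel or vanish modulo $\Phi_n(q)$. This would reflect the vanishing of Andrews' sum for odd $N$, and would leave a base-$q^2$ sum of length $2m$ to which the even case applies.
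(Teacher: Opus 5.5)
\paragraph{Verdict: there is a gap.} Your Step 1 (truncating at $k=m$, checking that the denominators are units, and using $q\equiv q^{-4m}$) is correct and is how the paper begins. The gap is Step 2, the heart of the argument, which is never carried out. The identity you recall, Andrews' $q$-analogue of Watson's sum with the pairs $\pm b$ and $\pm\sqrt{aq}$, cannot be specialized to reproduce the truncated summand:
\begin{itemize}
\item In base $q^4$ the pairs collapse to $(b^2;q^8)_k$ upstairs and $(aq^4;q^8)_k$ downstairs. You then cannot obtain four equal factors $(q;q^4)_k$ over four factors $(q^4;q^4)_k$, since $b$ and $-b$ cannot both be $\equiv q$ at an odd root of unity.
\item In base $q^2$ with $N=2m$ you get $2m+1$ terms in powers $q^{2k}$, with mixed base-$q^2$ and base-$q^4$ products. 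These have no termwise relation to your $m+1$ base-$q^4$ terms.
\end{itemize}
You notice this obstacle yourself. However, neither fallback (a microscoping perturbation, or cancellation of odd-index terms) is turned into an actual identity, so as written the proposal does not prove the theorem.

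\paragraph{Missing ingredient.} You need a different terminating ${}_4\phi_3$ evaluation of Andrews, one with no $\pm$ pairs:
\[
{}_{4}\phi_{3}\!\left[\begin{matrix} q^{-2N},\ a,\ b,\ q^{1-2N}/ab\\ q^{2-2N}/a,\ q^{2-2N}/b,\ abq\end{matrix};q^2,q^2\right]
=q^{-N}\frac{(a,b,-q;q)_N(ab;q^2)_N}{(ab;q)_N(a,b;q^2)_N}.
\]
Take $N=m$ (not $2m$), replace $q$ by $q^2$, and set $a=b=q$. Then:
\begin{itemize}
\item The upper parameters become $q^{-4m},q,q,q^{-4m}$, the lower ones become $q^{3-4m},q^{3-4m},q^4$, and base and argument are both $q^4$.
\item Since $q^{4m+1}=q^n\equiv 1$, you have $q^{-4m}\equiv q$ and $q^{3-4m}\equiv q^4$ modulo $\Phi_n(q)$.
\item The denominators $(q^{3-4m};q^4)_k$, $k\le m$, are units.
\end{itemize}
So the specialized series agrees termwise with $\sum_{k=0}^{m}(q;q^4)_k^4q^{4k}/(q^4;q^4)_k^4$ modulo $\Phi_n(q)$.

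\paragraph{Reducing the right-hand side.} The right side of the identity becomes
\[
q^{-2m}\frac{(q;q^2)_m^2(-q^2;q^2)_m(q^2;q^4)_m}{(q^2;q^2)_m(q;q^4)_m^2}.
\]
Reaching the stated form still takes three further steps:
\begin{itemize}
\item Use $(-q^2;q^2)_m(q^2;q^2)_m=(q^4;q^4)_m$.
\item Use the reflection congruence
\[
(q^4;q^4)_m\equiv(-1)^mq^{2m(m+1)}(q;q^4)_m \pmod{\Phi_n(q)},
\]
which comes from $1-q^{4j}\equiv-q^{4j}\bigl(1-q^{4(m-j)+1}\bigr)$. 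It turns $(q^4;q^4)_m/(q;q^4)_m^2$ into $q^{4m(m+1)}/(q^4;q^4)_m$.
\item Finally, $q^{-2m+4m(m+1)}\equiv q^m$, because $-2m+4m(m+1)-m=mn$.
\end{itemize}
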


The proof of Theorem~\ref{thm:conj61} occupies Section~\ref{sec:proof-one}.  Its main point is a regularized root-of-unity specialization of Jackson's summation, which supplies the missing congruence modulo $\Phi_n(q)$ in Guo's two-parameter microscoping congruence.  The proof of Theorem~\ref{thm:conj62} is given in Section~\ref{sec:proof-two}; it is based on Andrews' terminating ${}_4\phi_3$ summation.

\section{Proof of Theorem \ref{thm:conj61}}\label{sec:proof-one}

We shall use Jackson's terminating very-well-poised ${}_8\phi_7$ summation in the following form (see Gasper and Rahman~\cite{GasperRahman}).

\begin{lem}\label{lem:jackson}
If $N$ is a nonnegative integer and
\begin{equation*}
x^2q=yzuvq^{-N},
\end{equation*}
then
\begin{align}\label{eq:jackson}
&\sum_{k=0}^{N}
\frac{1-xq^{2k}}{1-x}
\frac{(x,y,z,u,v,q^{-N};q)_k}
     {(q,xq/y,xq/z,xq/u,xq/v,xq^{N+1};q)_k}q^k   \notag\\
&\qquad =
\frac{(xq,xq/yz,xq/yu,xq/zu;q)_N}
     {(xq/y,xq/z,xq/u,xq/yzu;q)_N}.
\end{align}
\end{lem}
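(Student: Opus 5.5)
This is the classical Jackson summation, and the paper may simply cite Gasper and Rahman~\cite{GasperRahman}. For a self-contained argument I would use \emph{polynomial interpolation in one free parameter combined with induction on $N$}. The sum on the left of \eqref{eq:jackson} is symmetric in $y,z,u,v,q^{-N}$. The balancing condition $x^2q=yzuvq^{-N}$ lets me eliminate $v=x^2q^{N+1}/(yzu)$. I then regard both sides of \eqref{eq:jackson} as rational functions of $u$, with $x,y,z,q$ generic.

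The first step is to clear denominators. On the right, the only $u$-dependent denominators are $(xq/u;q)_N$ and $(xq/yzu;q)_N$. On the left, the $k$-th term has $u$-dependence through
\[
\frac{(u,v;q)_k}{(xq/u,xq/v;q)_k},
\qquad v=\frac{x^2q^{N+1}}{yzu},\qquad \frac{xq}{v}=\frac{yzu}{xq^{N}}.
\]
Note that $(xq/v;q)_k=(yzuq^{-N}/x;q)_k$. This factor is related to $(xq/yzu;q)_N$ after reversing the product (its roots are $u$-values matching $(xq/yzu;q)_N$ up to powers of $q$ in $u$). So after multiplying both sides by $u^{N}(xq/u;q)_N\,(xq/yzu;q)_N$, each side becomes a Laurent polynomial in $u$. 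I would track the degrees carefully and check that the difference $D(u)$ of the two sides is a polynomial in $u$ of degree at most about $2N$, after normalizing by a suitable power of $u$.

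The second step is to produce enough zeros of $D(u)$, using induction on $N$. The case $N=0$ is trivial: both sides equal $1$. Suppose the identity holds for all smaller $N$.
\begin{itemize}
\item Specialize $u=q^{-M}$ with $0\le M<N$. The sum then terminates at $k=M$.
\item By the symmetry between $u$ and $q^{-N}$, the specialized sum is Jackson's sum of length $M$, with the roles of $q^{-N}$ and $u$ interchanged and the same balancing condition.
\item By the induction hypothesis it equals the product $\frac{(xq,xq/yz,xq/yq^{-N},xq/zq^{-N};q)_M}{(xq/y,xq/z,xq^{N+1},xq^{N+1}/yz;q)_M}$. A routine manipulation of $q$-shifted factorials identifies this with the right side of \eqref{eq:jackson} at $u=q^{-M}$.
\end{itemize}
The same argument applies with $v=q^{-M}$, that is $u=x^2q^{N+M+1}/(yz)$, by the symmetry $u\leftrightarrow v$. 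Together these give $2N$ distinct zeros of $D$.

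The final step supplies one more value, which is where I expect the main obstacle. I need a further point, for instance the limit $u\to\infty$ (equivalently $u\to 0$) where $D$ should still vanish, or a direct evaluation at $u=xq/y$-type points where terms collapse. The difficulty is precisely the bookkeeping: showing the degree of $D$ is at most $2N$ and finding the extra value. To handle it I would first try the leading-coefficient comparison as $u\to\infty$. In that limit the left side reduces to a balanced ${}_3\phi_2$ summable by the $q$-Pfaff--Saalschütz formula. If this becomes messy, I would fall back on the standard alternative proof: expand $\frac{(xq/yz;q)_k}{\cdots}$ via $q$-Saalschütz as an inner finite sum, interchange the order of summation, and evaluate the resulting inner very-well-poised ${}_6\phi_5$ by its terminating summation. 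This route leads to \eqref{eq:jackson} directly, at the cost of heavier algebra.
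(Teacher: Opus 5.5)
Your interpolation set-up is sound as far as it goes. After multiplying by $(xq/u;q)_N(xq/yzu;q)_N$, with $v=x^2q^{N+1}/(yzu)$, both sides become polynomials in $1/u$ of degree at most $2N$. The induction does give the $2N$ zeros at $u=q^{-M}$ and $v=q^{-M}$, $0\le M<N$. For the second family you should check directly, by reversing the four $u$-dependent products, that the right-hand side is invariant under $u\leftrightarrow v$; it is.

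The gap is the $(2N+1)$-st condition, and the way you propose to get it does not work. As $u\to\infty$ one has $(u,v;q)_k/(xq/u,xq/v;q)_k\to(xq^N/yz)^k$, so the left side tends to
\[
\sum_{k=0}^{N}\frac{1-xq^{2k}}{1-x}\,\frac{(x,y,z,q^{-N};q)_k}{(q,xq/y,xq/z,xq^{N+1};q)_k}\left(\frac{xq^{N+1}}{yz}\right)^{k}.
\]
This is the terminating very-well-poised ${}_6\phi_5$, not a balanced ${}_3\phi_2$: the factor $(1-xq^{2k})/(1-x)$ survives, so $q$-Pfaff--Saalsch\"utz does not apply. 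That ${}_6\phi_5$ evaluation is itself a limiting case of the lemma. Your fallback (Saalsch\"utz expansion, interchange, inner ${}_6\phi_5$) also relies on it. The point $u=xq/y$ likewise only reduces the sum to a ${}_6\phi_5$ with argument $q$. So as written the argument is not self-contained. The paper itself gives no proof; it simply quotes Jackson's formula from Gasper and Rahman.

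A point that closes the interpolation elementarily is $u=xq^{N+1}$. There $(u;q)_k$ cancels $(xq^{N+1};q)_k$, $(xq/u;q)_k=(q^{-N};q)_k$ cancels the numerator $(q^{-N};q)_k$, and $v=x/yz$. The left side then becomes the partial sum
\[
\sum_{k=0}^{N}\frac{1-xq^{2k}}{1-x}\,\frac{(x,y,z,x/yz;q)_k}{(q,xq/y,xq/z,yzq;q)_k}\,q^k
=\frac{(xq,yq,zq,xq/yz;q)_N}{(q,xq/y,xq/z,yzq;q)_N}.
\]
This telescopes by induction on $N$: the difference of consecutive right-hand sides equals the $N$-th summand, which reduces to an elementary polynomial identity in $q^N$. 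Reversing $(q^{-N}/y,q^{-N}/z,q^{-N},q^{-N}/yz;q)_N$ shows the result is exactly the right-hand side of \eqref{eq:jackson} at $u=xq^{N+1}$. Both sides are finite there, and for generic $x,y,z$ this point is distinct from the other $2N$, so your difference $D$ vanishes identically.

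Alternatively, if you are willing to quote the ${}_6\phi_5$ summation, your $u\to\infty$ limit finishes the proof. In that case, though, the argument is no more self-contained than the citation the paper uses.
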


The following lemma is the key additional congruence needed for Guo's microscoping argument.  Its proof keeps an auxiliary leading parameter until the removable singularity in the middle summand has been resolved.

\begin{lem}\label{lem:paramzero}
Let $n\equiv3\pmod5$. For indeterminates $a$ and $b$, define
\begin{equation}\label{eq:paramS}
\mathcal{S}_n(a,b)=
\sum_{k=0}^{n-1}
\frac{(1+q^{5k+1})(aq^2,q^2/a,bq^2,q^2/b,q^2;q^5)_k}
     {(1+q)(aq^5,q^5/a,bq^5,q^5/b,q^5;q^5)_k}q^{5k}.
\end{equation}
Then
\begin{equation}\label{eq:paramzero}
\mathcal{S}_n(a,b)\equiv0\pmod{\Phi_n(q)}.
\end{equation}
\end{lem}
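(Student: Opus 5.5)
The plan is to work at a primitive $n$-th root of unity. Since $\Phi_n(q)$ is irreducible, it suffices to show that $\mathcal{S}_n(a,b)$, viewed as a rational function of $a,b$, vanishes at $q=\zeta$ for every primitive $n$-th root of unity $\zeta$. First I check that no denominator in \eqref{eq:paramS} is divisible by $\Phi_n(q)$ for $0\le k\le n-1$. This holds because $\gcd(5,n)=1$, so $1-q^{5j}$ with $1\le j\le n-1$ is prime to $\Phi_n(q)$, and the factors involving $a$ and $b$ are generic.

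Next comes the truncation. Since $n\equiv3\pmod5$, the number $m=(4n-2)/5$ is an integer with $5m+2=4n$. Hence the factor $(q^2;q^5)_k$ is divisible by $\Phi_n(q)$ for all $k>m$. Modulo $\Phi_n(q)$ the sum therefore reduces to $\sum_{k=0}^{m}$. The crucial observation is $q^{-5m}=q^{2-4n}$, so at $q=\zeta$ we have $q^2=q^{-5m}$ and $q^{2}\cdot q^{5(m+1)}=q^{4n+5}=q^5$. Thus, at $\zeta$, the pair $(q^2;q^5)_k/(q^5;q^5)_k$ coincides with the pair $(q^{-5m};q^5)_k/(xq^{5(m+1)};q^5)_k$ for $x=q^2$. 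Likewise the pairs $(aq^2;q^5)_k/(q^5/a;q^5)_k$, and their analogues in $1/a$, $b$ and $1/b$, have exactly the very-well-poised shape $(y;q^5)_k/(xq^5/y;q^5)_k$ with $x=q^2$. So the truncated sum should be a specialization of Lemma~\ref{lem:jackson} with $q\to q^5$, $x=q^2$, $N=m$, and $\{y,z,u,v\}=\{aq^2,q^2/a,bq^2,q^2/b\}$.

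Two things do not match on the nose, and handling them is the main obstacle. The weight $(1+q^{5k+1})/(1+q)$ is not $(1-xq^{10k})/(1-x)$ with $x=q^2$. The balancing condition $x^2q^5=yzuvq^{-5N}$ also fails by a power of $q$ at $\zeta$. Following the hint preceding the lemma, I would keep a generic leading parameter. I would replace $x=q^2$ by $x=cq^2$, or equivalently write
\[
\frac{1+q^{5k+1}}{1+q}=\frac{1-q^{10k+2}}{1-q^2}\cdot\frac{(q;q^5)_k}{(q^6;q^5)_k},
\]
and absorb the extra ratio into a suitable choice of one parameter. That parameter is determined by the balancing condition, so that for generic $c$ the sum is literally the left-hand side of \eqref{eq:jackson}, which Lemma~\ref{lem:jackson} evaluates in closed form.

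Finally, I would evaluate the right-hand product $(xq^5,xq^5/yz,xq^5/yu,xq^5/zu;q^5)_m$ divided by $(xq^5/y,xq^5/z,xq^5/u,xq^5/yzu;q^5)_m$ at $q=\zeta$. I would show that the numerator contains a factor $1-q^{5j+\cdots}$ vanishing at $\zeta$ while the denominator stays nonzero. The most likely source is a factor $(q^{7};q^5)_m$ or $(q^{3};q^5)_m$, which contains $1-q^{n}$ or $1-q^{2n}$ because $n\equiv3\pmod 5$. Then I let $c\to1$. The delicate point is the one middle summand, where a factor like $1-cq^{5k+1}$, which vanishes at $c=1$, $q=\zeta$, appears in both numerator and denominator. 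I would cancel that factor before specializing, treating it as a removable singularity, and check that the remaining expression is regular at $c=1$, $q=\zeta$. This cancellation, together with the correct choice of the auxiliary parameter, is where I expect most of the care to be needed.
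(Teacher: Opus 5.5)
\textbf{There is a genuine gap: Jackson's summation, matched correctly, only reaches half of the truncated sum, and your plan has no step for the other half.}

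Your identification with Jackson's summation at $N=m$ is inconsistent, and this hides the real structure. With base $q^5$ and $x=q^2$ (or $cq^2$), the pair $(q^2;q^5)_k/(q^5;q^5)_k$ is already supplied by $(x;q^5)_k/(q^5;q^5)_k$, so it cannot also be the terminating pair. Once $aq^2,q^2/a,bq^2,q^2/b$ occupy $y,z,u,v$, the extra ratio $(q;q^5)_k/(q^6;q^5)_k$ from your rewriting of the weight must be the terminating pair $(q^{-5N};q^5)_k/(xq^{5N+5};q^5)_k$.

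The balancing condition confirms this, since it forces $q^{-5N}=q$. Modulo $\Phi_n(q)$ we have $q^{5h}=q^{2n-1}\equiv q^{-1}$ with $h=(2n-1)/5=m/2$, so $N=h$, not $N=m$. The ``failure by a power of $q$'' you noticed is exactly this. Also, to keep the balancing exact for generic $c$, one of $y,z,u,v$ has to depend on $c$, tending to $q^2/b$ as $c\to1$.

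As a result, the deformed Jackson sum has only the terms $0\le k\le h$. Write $T_k$ for the $k$-th summand of the truncated sum. In the limit the Jackson sum equals $\sum_{k<h}T_k+\tfrac12T_h$: after the cancellation you describe, the middle term tends to $1/(1+q)$. That is only half of $(1+q^{5h+1})/(1+q)\equiv 2/(1+q)$, because $5h+1=2n$. The terms with $h<k\le m$ never appear, so your claim that for generic $c$ the sum is literally the left-hand side of Jackson's formula is false.

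The missing idea is the reflection symmetry $T_{m-k}\equiv T_k\pmod{\Phi_n(q)}$ for $0\le k\le m$. You prove it by reversing the $q$-shifted factorials using $q^{2}q^{5m}=q^{4n}\equiv1$. The parameters $a\leftrightarrow1/a$ and $b\leftrightarrow1/b$ pair off, so the resulting constant $q^{3}q^{15k}$ is cancelled by the factor $(1+q^{5(m-k)+1})q^{5(m-k)}=q^{-3}q^{-10k}(1+q^{5k+1})$. Only with this symmetry does the half-sum evaluation give
\[
\sum_{k=0}^mT_k=2\Bigl(\sum_{k<h}T_k+\tfrac12T_h\Bigr)\equiv0 .
\]
The vanishing on the product side then comes from $(q^3;q^5)_h$, which contains $1-q^n$ because $(n-3)/5<h$. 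Your guessed products of length $m$ come from the wrong choice $N=m$.
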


\pf
Work in the localization of $\Q(a,b)[q]/(\Phi_n(q))$ obtained by inverting the denominator factors which occur in \eqref{eq:paramS}.  Put
\begin{equation*}
Q=q^5,
\qquad h=\frac{2n-1}{5},
\qquad m=2h.
\end{equation*}
Then $Q^h=q^{-1}$, $q=Q^{-h}$, $q^2=Q^{-2h}$, and $q^2Q^m=1$.  For $k\ge m+1$, the factor $(q^2;q^5)_k$ contains $1-q^{2+5m}=1-q^{4n}$, and hence vanishes modulo $\Phi_n(q)$.  Thus \eqref{eq:paramS} is congruent modulo $\Phi_n(q)$ to
\begin{equation*}
\sum_{k=0}^{m}T_k,
\end{equation*}
where
\begin{equation}\label{eq:Tk}
T_k=
\frac{1+qQ^k}{1+q}
\frac{(q^2,aq^2,q^2/a,bq^2,q^2/b;Q)_k}
     {(Q,aq^5,q^5/a,bq^5,q^5/b;Q)_k}Q^k .
\end{equation}

We first show the symmetry
\begin{equation}\label{eq:symmetry}
T_{m-k}=T_k\qquad(0\le k\le m).
\end{equation}
For $x$ set
\begin{equation*}
L_x(k)=\frac{(xq^2;Q)_k}{(xQ;Q)_k}.
\end{equation*}
Since $q^2Q^m=1$, reversing products gives, for $0\le k\le m$,
\begin{equation}\label{eq:Lreverse}
L_x(m-k)=L_x(m)q^{3k}L_{1/x}(k).
\end{equation}
Indeed, the omitted factors are
\begin{equation*}
\prod_{r=1}^{k}(1-xQ^{-r})=(-x)^kQ^{-k(k+1)/2}(Q/x;Q)_k
\end{equation*}
and
\begin{equation*}
\prod_{r=0}^{k-1}(1-xq^{-2}Q^{-r})=(-xq^{-2})^kQ^{-k(k-1)/2}(q^2/x;Q)_k,
\end{equation*}
which imply \eqref{eq:Lreverse}.  Furthermore,
\begin{equation}\label{eq:Lconstants}
L_x(m)L_{1/x}(m)=q^{-3m},
\qquad
L_1(m)=q^{m+1}.
\end{equation}
The first identity in \eqref{eq:Lconstants} follows because, using $q^2=Q^{-m}$, the zeros of $L_x(m)L_{1/x}(m)$ are
\begin{equation*}
Q,\ldots,Q^m,Q^{-1},\ldots,Q^{-m},
\end{equation*}
exactly the same as its poles, so the product is constant as a rational function of $x$; letting $x\to\infty$ gives $q^{-3m}$.  The second identity in \eqref{eq:Lconstants} follows from
\begin{equation*}
(q^2;Q)_m=(Q^{-m};Q)_m=(-1)^mQ^{-m(m+1)/2}(Q;Q)_m=q^{m+1}(Q;Q)_m,
\end{equation*}
because $m$ is even and $5m/2=2n-1$.

Let
\begin{equation*}
P(k)=L_a(k)L_{1/a}(k)L_b(k)L_{1/b}(k)L_1(k).
\end{equation*}
Equations \eqref{eq:Lreverse} and \eqref{eq:Lconstants} give
\begin{equation*}
P(m-k)=q^3q^{15k}P(k).
\end{equation*}
Here the constant factor is obtained as follows.  The two reciprocal
pairs \(L_aL_{1/a}\) and \(L_bL_{1/b}\) contribute \(q^{-6m}\),
while the remaining factor \(L_1\) contributes \(q^{m+1}\).  Hence the
constant part before reduction is
\begin{equation*}
q^{-6m}q^{m+1}=q^{-5m+1}=q^{-4n+3}\equiv q^3\pmod{\Phi_n(q)},
\end{equation*}
because \(5m=4n-2\).
Since
\begin{equation*}
1+qQ^{m-k}=1+q^{-1}Q^{-k}=q^{-1}Q^{-k}(1+qQ^k)
\end{equation*}
and $Q^m=q^{-2}$, the definition \eqref{eq:Tk} gives \eqref{eq:symmetry}.

It remains to prove the half-sum identity
\begin{equation}\label{eq:halfsum}
\sum_{k=0}^{h-1}T_k+\frac12T_h=0.
\end{equation}
Let $\alpha$ be an auxiliary variable and put
\begin{equation*}
c=aq^2,
\qquad
d=q^2/a,
\qquad
e=bq^2,
\qquad
f(\alpha)=\frac{\alpha^2Q^{h+1}}{cde}.
\end{equation*}
Then
\begin{equation*}
cdef(\alpha)Q^{-h}=\alpha^2Q.
\end{equation*}
Applying Lemma~\ref{lem:jackson} with base $Q$, truncation $N=h$, and
\[
x=\alpha,\qquad y=c,\qquad z=d,\qquad u=e,\qquad v=f(\alpha)
\]
gives
\begin{align}\label{eq:jacksonA}
&\sum_{k=0}^{h}
\frac{1-\alpha Q^{2k}}{1-\alpha}
\frac{(\alpha,Q^{-h},c,d,e,f(\alpha);Q)_k}
     {(Q,\alpha Q^{h+1},\alpha Q/c,\alpha Q/d,\alpha Q/e,\alpha Q/f(\alpha);Q)_k}Q^k  \notag\\
&\quad =
\frac{(\alpha Q,\alpha Q/cd,\alpha Q/ce,\alpha Q/de;Q)_h}
     {(\alpha Q/c,\alpha Q/d,\alpha Q/e,\alpha Q/cde;Q)_h}.
\end{align}
Now let $\alpha\to q^2=Q^{-2h}$.  The right-hand side of \eqref{eq:jacksonA} tends to zero because its numerator contains
\begin{equation*}
(\alpha Q/cd;Q)_h\longrightarrow(q^3;Q)_h,
\end{equation*}
and $(q^3;Q)_h$ contains $1-q^{3+5(n-3)/5}=1-q^n$.

On the left-hand side of \eqref{eq:jacksonA}, $f(\alpha)\to q^2/b$.  For $k<h$, the limit of the very-well-poised factor is
\begin{equation*}
\frac{1-q^2Q^{2k}}{1-q^2}\frac{(q;Q)_k}{(q^6;Q)_k}
=\frac{1+qQ^k}{1+q}.
\end{equation*}
For $k=h$, the factor $1-\alpha Q^{2h}$ cancels the last factor of $(\alpha Q^{h+1};Q)_h$ before the limit is taken, and hence
\begin{equation}\label{eq:middle-half}
\lim_{\alpha\to q^2}
\frac{1-\alpha Q^{2h}}{1-\alpha}\frac{(Q^{-h};Q)_h}{(\alpha Q^{h+1};Q)_h}
=\frac{1}{1+q}
=\frac12\frac{1+qQ^h}{1+q}.
\end{equation}
Therefore the limit of the left-hand side of \eqref{eq:jacksonA} is exactly the left-hand side of \eqref{eq:halfsum}.  This proves \eqref{eq:halfsum}.  By the symmetry \eqref{eq:symmetry},
\begin{equation*}
\sum_{k=0}^{m}T_k=2\sum_{k=0}^{h-1}T_k+T_h=0.
\end{equation*}
Thus \eqref{eq:paramzero} follows. \qed

For $x$ an indeterminate, put
\begin{equation*}
R_x(q)=
\frac{(xq^4,q^4/x,q^3,q^7;q^5)_m}
     {(xq^5,q^5/x,q^2,q^6;q^5)_m},
\qquad m=\frac{4n-2}{5},
\end{equation*}
and set
\begin{equation*}
P_a=(1-aq^{4n})(a-q^{4n}),
\qquad
P_b=(1-bq^{4n})(b-q^{4n}).
\end{equation*}

\begin{lem}\label{lem:paramcong}
Let $n\equiv3\pmod5$ and $m=(4n-2)/5$.  Then
\begin{align}\label{eq:paramcong}
\mathcal{S}_n(a,b)
&\equiv
\frac{(1-bq^{4n})(b-q^{4n})(-1-a^2+aq^{4n})}{(a-b)(1-ab)}R_b(q)\notag\\
&\quad+
\frac{(1-aq^{4n})(a-q^{4n})(-1-b^2+bq^{4n})}{(b-a)(1-ba)}R_a(q)
\pmod{\Phi_n(q)P_aP_b}.
\end{align}
\end{lem}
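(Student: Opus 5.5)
The congruence modulo $\Phi_n(q)P_aP_b$ follows from the Chinese remainder theorem once $\mathcal{S}_n(a,b)$ is known modulo each of the pairwise coprime factors $\Phi_n(q)$, $1-aq^{4n}$, $a-q^{4n}$, $1-bq^{4n}$, $b-q^{4n}$. These are coprime as polynomials in $a,b,q$, since $a\ne b$ and $ab\ne1$ generically.

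Modulo $\Phi_n(q)$, Lemma~\ref{lem:paramzero} gives $\mathcal{S}_n(a,b)\equiv0$. Every term on the right of \eqref{eq:paramcong} carries a factor $P_b$ or $P_a$, and I expect $R_a(q)$, $R_b(q)$ to be $\Phi_n$-integral. In the denominators, $(q^2;q^5)_m$ stops at $1-q^{5m-3}=1-q^{4n-5}$, which avoids $1-q^{4n}$. The factors $1-q^{n}$, $1-q^{2n}$ and $1-q^{3n}$ may occur in $(q^2;q^5)_m$ and $(q^6;q^5)_m$, but they should be balanced by numerator factors in $(q^3,q^7;q^5)_m$. I will check this bookkeeping, and it may be easier simply to note that the right-hand side is $\equiv0$ modulo $\Phi_n(q)$ because the prefactor $P_b$ (resp.\ $P_a$) vanishes there. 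In particular $(1-bq^{4n})(b-q^{4n})\equiv(1-b)(b-1)\pmod{\Phi_n}$, which does not vanish. So the $\Phi_n$-integrality of $R_x$ has to be checked honestly, or the $\Phi_n$ part of $R_x$ must be shown to cancel in combination. I would first try to verify directly that $R_x(q)$ has no $\Phi_n$ in its denominator, and that the combination vanishes modulo $\Phi_n$ via the factor $(q^3;q^5)_m$, which contains $1-q^{n}$ when $n\equiv3\pmod5$ since $(n-3)/5<m$. That gives $R_x\equiv0\pmod{\Phi_n}$ provided the denominator has no compensating factor.

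Modulo $1-aq^{4n}$ (the factor $a-q^{4n}$ is symmetric under $a\to1/a$), set $a=q^{-4n}$. Then $q^2/a=q^{4n+2}$ and $(aq^2;q^5)_k=(q^{2-4n};q^5)_k$ terminates at $k=m+1$, because $2-4n+5m=0$. The sum becomes a terminating series with $N=m$ in base $Q=q^5$. I will apply Lemma~\ref{lem:jackson} with $x=q$ (so that $\frac{1-xQ^{2k}}{1-x}$ supplies the factor $(1+q^{5k+1})/(1+q)$, via $(q;q^5)_k(1-q^{10k+2})/\ldots$ after the standard doubling), $y=q^2$, $z=bq^2$, $u=q^2/b$, $v=q^{2-4n}=Q^{-N}$-type, checking the balancing condition $x^2Q=yzuvQ^{-N}$. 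The product side should evaluate to $R_b(q)$ with $a$ specialized. Substituting $a=q^{-4n}$ into the right side of \eqref{eq:paramcong} kills the $R_a$ term, and the coefficient of $R_b$ becomes
\[
\frac{(1-bq^{4n})(b-q^{4n})(-1-q^{-8n}+q^{0})}{(q^{-4n}-b)(1-bq^{-4n})}.
\]
This should simplify to $1$ after multiplying through by $q^{8n}$. I will verify that simplification, and I will verify that the Jackson product matches $R_b$ at $a=q^{-4n}$. The same argument with $b$ in place of $a$ handles $P_b$.

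The main obstacle is matching the Jackson parameters so that the very-well-poised factor reproduces $(1+q^{5k+1})/(1+q)$ and the product side equals exactly $R_b(q)$, including the sign and power of $q$ in the coefficient. A secondary obstacle is the $\Phi_n$-integrality of $R_x$ used in the first step.
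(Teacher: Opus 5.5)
Your route is the paper's: specialize $a=q^{\pm4n}$ and apply Jackson's summation to get $\mathcal{S}_n(a,b)\equiv R_b(q)\pmod{P_a}$, obtain the $P_b$ case by symmetry, verify the interpolation coefficients, and handle $\Phi_n(q)$ via Lemma~\ref{lem:paramzero} together with $R_x(q)\equiv0$. Your check that the coefficient equals $1$ at $a=q^{-4n}$ is correct. However, two of the steps you deferred fail as you set them up.

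\textbf{The Jackson specialization.} With $x=q$, the well-poised factor is $(1-q^{10k+1})/(1-q)$, and $(x;Q)_k=(q;Q)_k$ does not match $(q^2;q^5)_k$. Your list also omits the parameter $q^2/a=q^{4n+2}$ and uses $q^{2-4n}$ both as $v$ and as $Q^{-N}$. The balancing condition then fails: $x^2Q=q^7$, whereas $yzuvQ^{-N}=q^{10-8n}$. Your parenthetical about $(q;q^5)_k(1-q^{10k+2})$ points to the right choice, with the roles of $x$ and $y$ exchanged. Take
\[
Q=q^5,\quad N=m,\quad x=q^2,\quad y=q,\quad z=bq^2,\quad u=q^2/b,\quad v=q^{4n+2},
\]
so that $Q^{-N}=q^{2-4n}=aq^2$. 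Then the following all hold:
\begin{itemize}
\item $\frac{1-q^2Q^{2k}}{1-q^2}\cdot\frac{(q;Q)_k}{(q^6;Q)_k}=\frac{1+qQ^k}{1+q}$;
\item the balancing condition reads $q^9=q\cdot q^4\cdot q^{4n+2}\cdot q^{2-4n}$;
\item the denominators $(Q,q^6,q^5/b,bq^5,q^{5-4n},q^{4n+5};Q)_k$ match the summand;
\item the product side is $\frac{(q^7,q^4/b,bq^4,q^3;Q)_m}{(q^6,q^5/b,bq^5,q^2;Q)_m}=R_b(q)$.
\end{itemize}

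\textbf{The vanishing of $R_x(q)$ modulo $\Phi_n(q)$.} Your working hypothesis that $R_x(q)$ has no $\Phi_n(q)$ in its denominator is false. Since $2n\equiv1\pmod5$ and $6\le 2n\le 5m+1=4n-1$, the factor $(q^6;q^5)_m$ contains $1-q^{2n}$. This cancels, up to the unit $1/(1+q^n)$, the factor $1-q^n$ from $(q^3;q^5)_m$ that you intended to use. The needed extra zero comes from the last factor of $(q^7;q^5)_m$, namely $1-q^{5m+2}=1-q^{4n}$. Two other sources contribute nothing:
\begin{itemize}
\item $(q^2;q^5)_m$, whose exponents are $\equiv2\pmod5$ and at most $4n-5$, so they miss $4n$, the only multiple of $n$ up to $4n$ in that residue class;
\item the $x$-dependent factors.
\end{itemize}
Hence $R_x(q)$ carries exactly one net factor of $\Phi_n(q)$, which is the count the paper makes. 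With these two corrections your argument coincides with the paper's proof.
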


\pf
First recall the standard microscoping congruence modulo $P_aP_b$ in Guo~\cite{Guo2025}.  If $a=q^{4n}$ or $a=q^{-4n}$, then one of the two parameters $aq^2$ and $q^2/a$ becomes $q^{2+4n}$ and the other becomes $q^{2-4n}=q^{-5m}$.  Jackson's summation \eqref{eq:jackson} gives
\begin{equation*}
\mathcal{S}_n(a,b)=R_b(q)
\end{equation*}
for these two specializations.  Hence
\begin{equation*}
\mathcal{S}_n(a,b)\equiv R_b(q)\pmod{P_a}.
\end{equation*}
By symmetry in $a$ and $b$,
\begin{equation*}
\mathcal{S}_n(a,b)\equiv R_a(q)\pmod{P_b}.
\end{equation*}
The two rational factors in \eqref{eq:paramcong} are the corresponding Chinese-remainder interpolation factors.  We use the usual
creative-microscoping localization in which the factors \((a-b)(1-ab)\)
and \((b-a)(1-ba)\) are units; equivalently, one may clear these
denominators before checking the congruence in the polynomial ring.  Directly,
\begin{align*}
\frac{(1-bq^{4n})(b-q^{4n})(-1-a^2+aq^{4n})}{(a-b)(1-ab)}&\equiv1\pmod{P_a},\\
\frac{(1-aq^{4n})(a-q^{4n})(-1-b^2+bq^{4n})}{(b-a)(1-ba)}&\equiv1\pmod{P_b},
\end{align*}
and each factor is congruent to zero modulo the other parameter modulus.
For instance, the first congruence modulo \(P_a=(1-aq^{4n})(a-q^{4n})\)
is checked by substituting the two roots \(a=q^{4n}\) and
\(a=q^{-4n}\); the second is identical with \(a\) and \(b\) interchanged.
This proves \eqref{eq:paramcong} modulo $P_aP_b$.

It remains to prove \eqref{eq:paramcong} modulo $\Phi_n(q)$.  The left-hand side is congruent to zero by Lemma~\ref{lem:paramzero}.  We show that the right-hand side is also congruent to zero.  Write $n=5s+3$, so $m=4s+2$.  In $R_x(q)$ the numerator contains
\begin{equation*}
1-q^n\quad\hbox{from }(q^3;q^5)_m,
\qquad
1-q^{4n}\quad\hbox{from }(q^7;q^5)_m,
\end{equation*}
whereas the only factor independent of $x$ in the denominator divisible by $\Phi_n(q)$ is
\begin{equation*}
1-q^{2n}\quad\hbox{from }(q^6;q^5)_m.
\end{equation*}
Consequently $R_x(q)$ contains
\begin{equation*}
\frac{(1-q^n)(1-q^{4n})}{1-q^{2n}}
=(1-q^n)\frac{1+q^n+q^{2n}+q^{3n}}{1+q^n}
\end{equation*}
times a unit in the localization at $\Phi_n(q)$.  Thus $R_x(q)\equiv0\pmod{\Phi_n(q)}$.  Since $\Phi_n(q)$, $P_a$, and $P_b$ are pairwise coprime, the Chinese remainder theorem gives \eqref{eq:paramcong}. \qed

We also need the following divisibility observation.

\begin{lem}\label{lem:Aorder}
Let $n=5s+3$ and $m=4s+2$.  Put
\begin{equation*}
A_n(q)=
\frac{(q^3,q^4,q^4,q^7;q^5)_m}
     {(q^2,q^5,q^5,q^6;q^5)_m}.
\end{equation*}
Then $A_n(q)$ is divisible by $\Phi_n(q)^3$.
\end{lem}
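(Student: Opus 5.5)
We count the factors divisible by $\Phi_n(q)$ in each $q$-Pochhammer symbol. Write $n=5s+3$ and $m=4s+2$. Recall that $\Phi_n(q)$ divides $1-q^N$ exactly when $n\mid N$, and then only to the first power. So each factor $1-q^{r+5j}$ with $0\le j\le m-1$ contributes one copy of $\Phi_n(q)$ precisely when $n\mid r+5j$, and is a unit otherwise. The exponents $r+5j$ lie in $[r,r+5m-5]=[r,\,r+4n-7]$. Since $5m=4n-2$, the candidates are $N=n,2n,3n,4n$, and $N=n k$ is attained by some $j$ if and only if $nk\equiv r\pmod 5$ and $r\le nk\le r+5m-5$. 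Because $n\equiv3\pmod5$, we have $nk\equiv 3k\pmod 5$, so $nk\equiv 3,1,4,2$ for $k=1,2,3,4$.

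We treat the numerator and denominator separately.

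\emph{Numerator.}
\begin{itemize}
\item $r=3$: we need $k\equiv1\pmod 5$, so $N=n$. Since $3\le n\le 4n-4$, this gives one factor.
\item $r=4$: we need $3k\equiv4$, so $k=3$ and $N=3n$. Since $3n\le 4n-3$, this gives one factor each, hence two in total for the two copies of $(q^4;q^5)_m$.
\item $r=7$: we need $3k\equiv2$, so $k=4$ and $N=4n$. Since $4n\le 7+4n-7$, this gives one factor; the upper bound is attained exactly, which is the factor $1-q^{4n}$ already noted in the proof of Lemma~\ref{lem:paramcong}.
\end{itemize}
The numerator therefore has total multiplicity $4$.

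\emph{Denominator.}
\begin{itemize}
\item $r=2$: we need $3k\equiv2$, so $k=4$. But $4n>2+4n-7$, so there is no factor.
\item $r=5$: we need $k\equiv0\pmod5$, so there is no factor in either copy.
\item $r=6$: we need $3k\equiv1$, so $k=2$ and $N=2n$. Since $6\le2n\le 4n-1$, this gives one factor.
\end{itemize}
The denominator therefore has multiplicity $1$.

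It follows that $A_n(q)$ equals $\Phi_n(q)^{4-1}=\Phi_n(q)^3$ times a rational function whose numerator and denominator are coprime to $\Phi_n(q)$, which proves the lemma.

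The only delicate point is the endpoint bookkeeping, namely checking that $4n$ lies inside the range for $r=7$ but outside it for $r=2$. We also need the uniqueness of each $k$, which holds because $k$ runs over $1,\dots,4$ and $3$ is invertible mod $5$. No estimate beyond these explicit inequalities is needed.
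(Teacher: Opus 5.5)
Your proposal is correct and follows essentially the same route as the paper: both locate the unique factor divisible by $\Phi_n(q)$ in each Pochhammer symbol, namely $1-q^n$, $(1-q^{3n})^2$ and $1-q^{4n}$ in the numerator and $1-q^{2n}$ in the denominator, and then net the multiplicities to get $\Phi_n(q)^3$. Your residue-mod-$5$ and endpoint bookkeeping also verifies the paper's unproved assertion that all remaining denominator factors are units at $\Phi_n(q)$.
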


\pf
The numerator of $A_n(q)$ contains
\begin{equation*}
1-q^n\quad\hbox{from }(q^3;q^5)_m,
\end{equation*}
\begin{equation*}
1-q^{3n}\quad\hbox{from each of the two factors }(q^4;q^5)_m,
\end{equation*}
and
\begin{equation*}
1-q^{4n}\quad\hbox{from }(q^7;q^5)_m.
\end{equation*}
The denominator contains one factor divisible by $\Phi_n(q)$, namely
\begin{equation*}
1-q^{2n}\quad\hbox{from }(q^6;q^5)_m,
\end{equation*}
and all remaining denominator factors are units modulo $\Phi_n(q)$.  Therefore, up to a unit at $\Phi_n(q)$,
\begin{equation*}
A_n(q)=
\frac{(1-q^n)(1-q^{3n})^2(1-q^{4n})}{1-q^{2n}}\,U(q)
\end{equation*}
for some unit $U(q)$.  Since
\begin{equation*}
\frac{(1-q^n)(1-q^{3n})^2(1-q^{4n})}{1-q^{2n}}
=(1-q^n)^3
\frac{(1+q^n+q^{2n})^2(1+q^n+q^{2n}+q^{3n})}{1+q^n},
\end{equation*}
the claim follows. \qed

\noindent{\it Proof of Theorem~\ref{thm:conj61}.}
Put
\begin{equation*}
X=q^{4n},
\qquad
A(a)=\frac{(aq^4,q^4/a,q^3,q^7;q^5)_m}{(aq^5,q^5/a,q^2,q^6;q^5)_m},
\qquad
A_0=A(1)=A_n(q).
\end{equation*}
In Lemma~\ref{lem:paramcong}, let $b=1$.  Since $(1-q^{4n})^2=(1-X)^2$ contributes two powers of $\Phi_n(q)$, we obtain, modulo $\Phi_n(q)^3P_a$,
\begin{align}\label{eq:afterb}
\mathcal{S}_n(a,1)
&\equiv
\frac{(1-X)^2(1+a^2-aX)}{(1-a)^2}A_0
-
\frac{(1-aX)(a-X)(2-X)}{(1-a)^2}A(a)\notag\\
&=(1-X)^2A_0
+(2-X)\left\{
\frac{a(1-X)^2}{(1-a)^2}A_0
-
\frac{(1-aX)(a-X)}{(1-a)^2}A(a)
\right\}.
\end{align}
The expression in braces in \eqref{eq:afterb} has a removable
singularity at \(a=1\).  We verify this and compute the limiting value.

Let
\begin{equation*}
H_m(q)=\sum_{j=1}^{m}\left(\frac{q^{5j-1}}{[5j-1]^2}-\frac{q^{5j}}{[5j]^2}\right).
\end{equation*}
The required limiting value is
\begin{equation}\label{eq:limit}
\lim_{a\to1}\left\{
\frac{a(1-X)^2}{(1-a)^2}A_0
-
\frac{(1-aX)(a-X)}{(1-a)^2}A(a)
\right\}
=A_0\left(X+[4n]^2H_m(q)\right).
\end{equation}
To verify \eqref{eq:limit}, write $A(a)=A_0G(a)$.  Then $G(1)=1$ and $G'(1)=0$ by the symmetry $G(a)=G(1/a)$.  Moreover
\begin{equation*}
\left.\frac{d^2}{da^2}\log\bigl((aq^r,q^r/a;q^5)_m\bigr)\right|_{a=1}
=-2\sum_{j=0}^{m-1}\frac{q^{r+5j}}{(1-q^{r+5j})^2},
\end{equation*}
and hence
\begin{equation*}
G''(1)=\left.(\log G(a))''\right|_{a=1}
=-\frac{2}{(1-q)^2}H_m(q).
\end{equation*}
Expanding
\begin{equation*}
G(a)=1+\frac{G''(1)}{2}(a-1)^2+O((a-1)^3)
\end{equation*}
gives \eqref{eq:limit}.  This calculation also shows explicitly that
the apparent pole of order two at \(a=1\) in \eqref{eq:afterb} is
removable.  Hence it is legitimate to specialize \(a=1\).  Under this
specialization
\begin{equation*}
P_a=(1-aq^{4n})(a-q^{4n})\longmapsto (1-q^{4n})^2=(1-X)^2,
\end{equation*}
which contributes two additional factors of \(\Phi_n(q)\).  Therefore
the congruence modulo \(\Phi_n(q)^3P_a\) specializes to a congruence
modulo \(\Phi_n(q)^3(1-X)^2\), and hence modulo \(\Phi_n(q)^5\).

It follows from \eqref{eq:afterb} and \eqref{eq:limit} that, modulo $\Phi_n(q)^5$,
\begin{align}\label{eq:withH}
\sum_{k=0}^{n-1}\frac{(1+q^{5k+1})(q^2;q^5)_k^5}{(1+q)(q^5;q^5)_k^5}q^{5k}
&\equiv A_0\left\{(1-X)^2+(2-X)\left(X+[4n]^2H_m(q)\right)\right\}\notag\\
&=A_0\left\{1+(2-X)[4n]^2H_m(q)\right\}.
\end{align}
Furthermore,
\begin{equation*}
[4n]^2\bigl(2-X-X^{-1}\bigr)=-X^{-1}[4n]^2(X-1)^2
\end{equation*}
is divisible by $\Phi_n(q)^4$.  The expression $H_m(q)$ has at most a double pole at $\Phi_n(q)$:
the only possible pole comes from the denominator \([5j-1]^2\) when
\(5j-1\) is a multiple of \(n\), which occurs for
\(j=(3n+1)/5\).  Meanwhile \(A_0\) is divisible by $\Phi_n(q)^3$ by
Lemma~\ref{lem:Aorder}.  Therefore the error caused by replacing $2-X$
by $X^{-1}=q^{-4n}$ in \eqref{eq:withH} is divisible by $\Phi_n(q)^5$.  Thus
\begin{equation}\label{eq:almost}
\sum_{k=0}^{n-1}\frac{(1+q^{5k+1})(q^2;q^5)_k^5}{(1+q)(q^5;q^5)_k^5}q^{5k}
\equiv A_0\left(1+[4n]^2q^{-4n}H_m(q)\right)
\pmod{\Phi_n(q)^5}.
\end{equation}

Finally, the only term in $H_m(q)$ whose denominator is divisible by $\Phi_n(q)$ is the one with
\begin{equation*}
j_0=\frac{3n+1}{5},
\qquad
5j_0-1=3n.
\end{equation*}
Therefore
\begin{equation*}
H_m(q)=\frac{q^{3n}}{[3n]^2}+H_m^*(q),
\end{equation*}
where $H_m^*(q)$ is regular at $\Phi_n(q)$.  By Lemma~\ref{lem:Aorder}, the product
\begin{equation*}
A_0[4n]^2q^{-4n}H_m^*(q)
\end{equation*}
is divisible by $\Phi_n(q)^5$.  Hence \eqref{eq:almost} becomes
\begin{equation*}
\sum_{k=0}^{n-1}\frac{(1+q^{5k+1})(q^2;q^5)_k^5}{(1+q)(q^5;q^5)_k^5}q^{5k}
\equiv
A_0\left(1+\frac{[4n]^2q^{-n}}{[3n]^2}\right)
\pmod{\Phi_n(q)^5},
\end{equation*}
which is exactly \eqref{eq:theorem-one}. \qed

\section{Proof of Theorem \ref{thm:conj62}}\label{sec:proof-two}

Recall the standard definition of a basic hypergeometric series, as in Gasper and Rahman~\cite{GasperRahman}:
\begin{equation*}
{}_{r+1}\phi_r\!
\left[\begin{matrix}a_1,\ldots,a_{r+1}\\ b_1,\ldots,b_r\end{matrix};q,z\right]
=
\sum_{k=0}^{\infty}
\frac{(a_1,\ldots,a_{r+1};q)_k}{(q,b_1,\ldots,b_r;q)_k}z^k.
\end{equation*}
We shall use the following terminating ${}_4\phi_3$ summation of Andrews~\cite{Andrews2011} (see also Guo~\cite{Guo2013Andrews}).

\begin{lem}\label{lem:andrews}
For every integer $N\ge0$,
\begin{equation}\label{eq:andrews}
{}_{4}\phi_{3}\!\left[
\begin{matrix}
 q^{-2N},\ a,\ b,\ q^{1-2N}/ab\\
 q^{2-2N}/a,\ q^{2-2N}/b,\ abq
\end{matrix};q^2,q^2\right]
=
q^{-N}\frac{(a,b,-q;q)_N(ab;q^2)_N}
{(ab;q)_N(a,b;q^2)_N}.
\end{equation}
\end{lem}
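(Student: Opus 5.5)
This is Andrews' terminating summation, a $q$-analogue of Watson's ${}_3F_2$ summation, which we are quoting from \cite{Andrews2011}. The simplest course is to take it as known from \cite{Andrews2011,Guo2013Andrews}. If a self-contained verification is wanted, I would prove \eqref{eq:andrews} by the $q$-WZ (creative telescoping) method in $N$.

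The series is terminating and, in base $q^2$, not balanced. A quick count shows the denominator parameters multiply to $q^{5-4N}$ rather than $q^{3-4N}$. So one should not expect a direct reduction to the $q$-Pfaff--Saalschütz sum, and a recurrence argument is the natural tool.

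Concretely, write the right-hand side of \eqref{eq:andrews} as $\rho(N)$ and let $t(N,k)$ be the $k$-th summand of the ${}_4\phi_3$. Set
\begin{equation*}
F(N,k)=\frac{t(N,k)}{\rho(N)},
\end{equation*}
so the claim becomes $\sum_{k}F(N,k)=1$ for every $N\ge0$.

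The key step is to find a rational function $R(N,k)$ in $q^N$, $q^k$, $a$ and $b$ such that, with $G(N,k)=R(N,k)F(N,k)$,
\begin{equation*}
F(N+1,k)-F(N,k)=G(N,k+1)-G(N,k).
\end{equation*}
I would find $R$ with the $q$-Zeilberger algorithm. Because $\rho(N+1)/\rho(N)$ is an explicit first-order ratio, namely
\begin{equation*}
q^{-1}\frac{(1-aq^N)(1-bq^N)(1+q^{N+1})(1-abq^{2N})(1-abq^{N})}{(1-abq^N)(1-aq^{2N})(1-bq^{2N})(1-abq^{N})}
\end{equation*}
up to obvious simplification, one expects a first-order recurrence.

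The verification then reduces to checking a polynomial identity after dividing through by $F(N,k)$. The ratios $F(N+1,k)/F(N,k)$ and $F(N,k+1)/F(N,k)$ are rational in $q^k$, so this is routine.

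Summing the telescoping relation over all $k$ gives $\sum_k F(N+1,k)=\sum_k F(N,k)$. For this one needs $G(N,k)$ to vanish at $k=0$ and beyond the terminating range, which should follow from the factor $(q^{-2N};q^2)_k$, possibly after shifting the certificate so that it carries a factor $1-q^{2k}$. The base case $N=0$ is trivial, since both sides equal $1$.

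The main obstacle is producing the certificate $R(N,k)$ by hand. The step $N\to N+1$ moves three parameters at once ($q^{-2N}$, $q^{2-2N}/a$, $q^{2-2N}/b$), together with the parameter $q^{1-2N}/ab$. If that becomes unwieldy, I would instead reverse the summation $k\to N-k$. This turns the series into a form closer to Andrews' original $q$-Watson statement, and I would then follow Andrews' derivation, which combines a quadratic transformation with the $q$-Pfaff--Saalschütz sum, relying on \cite{Andrews2011} for the transformation itself.
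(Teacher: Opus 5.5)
Your primary course, quoting the identity from Andrews (see also Guo), is exactly what the paper does: it gives no proof of this lemma and simply cites it. Your optional $q$-WZ route is only a plan, since no certificate is exhibited and a first-order one is merely expected. Your reversal fallback would also not help, because reversing $k\to N-k$ leaves the parameter set invariant and returns, up to a constant factor, the same ${}_4\phi_3$ with argument $q^4$ in place of $q^2$, which is no closer to the $q$-Watson form.
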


We need one elementary product reduction.

\begin{lem}\label{lem:product}
Let $n=4m+1$ with $m\ge1$.  Then
\begin{equation}\label{eq:A-B}
(q^4;q^4)_m\equiv (-1)^m q^{2m(m+1)}(q;q^4)_m\pmod{\Phi_n(q)}.
\end{equation}
Consequently,
\begin{equation}\label{eq:ratio-lemma}
\frac{(q^4;q^4)_m}{(q;q^4)_m^2}
\equiv
\frac{q^{4m(m+1)}}{(q^4;q^4)_m}
\pmod{\Phi_n(q)}.
\end{equation}
\end{lem}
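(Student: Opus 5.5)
The plan is to reduce everything modulo $\Phi_n(q)$, where $q^n\equiv1$, and match the factors of $(q;q^4)_m$ with those of $(q^4;q^4)_m$ one by one. The key observation is that with $n=4m+1$ we have
\[
4j+1-n=4(j-m)=-4(m-j),
\]
so each exponent $4j+1$ in $(q;q^4)_m$ is congruent modulo $n$ to a negative multiple of $4$.

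First, for $0\le j\le m-1$, use $q^n\equiv1\pmod{\Phi_n(q)}$ to write
\[
1-q^{4j+1}\equiv 1-q^{-4(m-j)}=-q^{-4(m-j)}\bigl(1-q^{4(m-j)}\bigr)\pmod{\Phi_n(q)}.
\]
Next, take the product over $j$ and substitute $i=m-j$, which runs over $1,\dots,m$. This gives
\[
(q;q^4)_m\equiv(-1)^m q^{-4\sum_{i=1}^m i}\,(q^4;q^4)_m=(-1)^mq^{-2m(m+1)}(q^4;q^4)_m\pmod{\Phi_n(q)}.
\]
Multiplying both sides by $(-1)^mq^{2m(m+1)}$ yields \eqref{eq:A-B}. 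All of these manipulations take place in $\Z[q]/(\Phi_n(q))$ with $q$ a unit, so no localization issues arise.

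For \eqref{eq:ratio-lemma}, square the congruence just obtained:
\[
(q;q^4)_m^2\equiv q^{-4m(m+1)}(q^4;q^4)_m^2\pmod{\Phi_n(q)}.
\]
Then divide $(q^4;q^4)_m$ by both sides. This division is legitimate because $(q^4;q^4)_m$ is coprime to $\Phi_n(q)$: its factors are $1-q^{4i}$ with $1\le 4i\le 4m<n$, and none of these exponents is a multiple of $n$. Consequently $(q;q^4)_m$ is also a unit modulo $\Phi_n(q)$, and the quotient is
\[
\frac{(q^4;q^4)_m}{(q;q^4)_m^2}\equiv\frac{q^{4m(m+1)}}{(q^4;q^4)_m}\pmod{\Phi_n(q)}.
\]

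I do not expect a real obstacle. The only points that need care are the sign and power bookkeeping in the reindexing $i=m-j$, and checking that the divided quantity is a unit modulo $\Phi_n(q)$, which is exactly the observation $4m<n$.
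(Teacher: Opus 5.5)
Your proposal is correct and is essentially the paper's argument: you pair factors using $q^n\equiv1\pmod{\Phi_n(q)}$ and reindex by $i=m-j$, only running the conversion from $(q;q^4)_m$ to $(q^4;q^4)_m$ instead of the paper's reverse direction. Your derivation of the second congruence, squaring and then dividing after checking coprimality via $4m<n$, is the same as the paper's step $A\equiv CB\Rightarrow A/B^2\equiv C^2/A$.
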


\pf
Since $q^n\equiv1\pmod{\Phi_n(q)}$ and $n=4m+1$, for $1\le j\le m$ we have
\begin{equation}\label{eq:factor-transform}
1-q^{4j}=-q^{4j}(1-q^{-4j})
\equiv -q^{4j}(1-q^{n-4j})
=-q^{4j}(1-q^{4(m-j)+1})
\pmod{\Phi_n(q)}.
\end{equation}
Multiplying \eqref{eq:factor-transform} over $j=1,\ldots,m$ gives
\begin{equation*}
(q^4;q^4)_m
\equiv
(-1)^m q^{4(1+2+\cdots+m)}
\prod_{j=1}^{m}(1-q^{4(m-j)+1})
=(-1)^m q^{2m(m+1)}(q;q^4)_m,
\end{equation*}
which proves \eqref{eq:A-B}.

The factors in $(q;q^4)_m$ and $(q^4;q^4)_m$ are coprime to $\Phi_n(q)$.  Indeed, $\Phi_n(q)$ divides $1-q^s$ only when $n\mid s$, and the exponents $1,5,\ldots,4m-3$ and $4,8,\ldots,4m$ are not divisible by $n=4m+1$.  Hence these two products are invertible modulo $\Phi_n(q)$.  From \eqref{eq:A-B}, if
\begin{equation*}
A=(q^4;q^4)_m,
\qquad
B=(q;q^4)_m,
\qquad
C=(-1)^m q^{2m(m+1)},
\end{equation*}
then $A\equiv CB$, and therefore
\begin{equation*}
\frac{A}{B^2}\equiv\frac{C^2}{A}
=\frac{q^{4m(m+1)}}{(q^4;q^4)_m}
\pmod{\Phi_n(q)}.
\end{equation*}
This is \eqref{eq:ratio-lemma}. \qed

\noindent{\it Proof of Theorem~\ref{thm:conj62}.}
Put
\begin{equation*}
m=\frac{n-1}{4},
\qquad\hbox{so that}\qquad n=4m+1.
\end{equation*}
Because $n$ is odd and $\gcd(n,4)=1$, the factors of $(q^4;q^4)_k$ are coprime to $\Phi_n(q)$ for $0\le k\le n-1$.  Thus the summands in \eqref{eq:theorem-two} are well-defined modulo $\Phi_n(q)$.  For $k\ge m+1$, the product $(q;q^4)_k$ contains the factor $1-q^{1+4m}=1-q^n$, so the corresponding summand is congruent to zero modulo $\Phi_n(q)$.  Hence
\begin{equation}\label{eq:truncate}
\sum_{k=0}^{n-1}\frac{(q;q^4)_k^4}{(q^4;q^4)_k^4}q^{4k}
\equiv
\sum_{k=0}^{m}\frac{(q;q^4)_k^4}{(q^4;q^4)_k^4}q^{4k}
\pmod{\Phi_n(q)}.
\end{equation}

In Andrews' summation \eqref{eq:andrews}, make the substitution
\begin{equation}\label{eq:specialization-andrews}
N=m,
\qquad
q\to q^2,
\qquad
a=b=q.
\end{equation}
The left-hand side becomes
\begin{equation}\label{eq:andrews-lhs-specialized}
{}_{4}\phi_{3}\!\left[
\begin{matrix}
 q^{-4m},\ q,\ q,\ q^{-4m}\\
 q^{3-4m},\ q^{3-4m},\ q^4
\end{matrix};q^4,q^4\right].
\end{equation}
It terminates at $k=m$.  Since $q^n\equiv1\pmod{\Phi_n(q)}$ and $n=4m+1$, we have, termwise for $0\le k\le m$,
\begin{equation*}
(q^{-4m};q^4)_k\equiv(q;q^4)_k,
\qquad
(q^{3-4m};q^4)_k\equiv(q^4;q^4)_k
\pmod{\Phi_n(q)}.
\end{equation*}
All denominator factors in this reduction are coprime to $\Phi_n(q)$.  Therefore \eqref{eq:andrews-lhs-specialized} is congruent to
\begin{equation}\label{eq:lhs-reduced}
\sum_{k=0}^{m}\frac{(q;q^4)_k^4}{(q^4;q^4)_k^4}q^{4k}
\pmod{\Phi_n(q)}.
\end{equation}

The right-hand side of \eqref{eq:andrews} under the specialization \eqref{eq:specialization-andrews} is
\begin{equation}\label{eq:rhs-raw}
q^{-2m}
\frac{(q;q^2)_m^2(-q^2;q^2)_m(q^2;q^4)_m}
{(q^2;q^2)_m(q;q^4)_m^2}.
\end{equation}
The denominator in \eqref{eq:rhs-raw} is invertible modulo $\Phi_n(q)$: for $(q^2;q^2)_m$ the exponents are $2,4,\ldots,2m<n$, and for $(q;q^4)_m$ this was noted in Lemma~\ref{lem:product}.  Using the elementary identity
\begin{equation*}
(-q^2;q^2)_m(q^2;q^2)_m=(q^4;q^4)_m,
\end{equation*}
we rewrite \eqref{eq:rhs-raw} as
\begin{equation}\label{eq:rhs-middle}
q^{-2m}
\frac{(q;q^2)_m^2(q^2;q^4)_m(q^4;q^4)_m}
{(q^2;q^2)_m^2(q;q^4)_m^2}.
\end{equation}
By Lemma~\ref{lem:product},
\begin{equation*}
\frac{(q^4;q^4)_m}{(q;q^4)_m^2}
\equiv
\frac{q^{4m(m+1)}}{(q^4;q^4)_m}
\pmod{\Phi_n(q)}.
\end{equation*}
Consequently,
\begin{equation}\label{eq:rhs-reduced}
q^{-2m}
\frac{(q;q^2)_m^2(q^2;q^4)_m(q^4;q^4)_m}
{(q^2;q^2)_m^2(q;q^4)_m^2}
\equiv
q^{-2m+4m(m+1)}
\frac{(q;q^2)_m^2(q^2;q^4)_m}
{(q^2;q^2)_m^2(q^4;q^4)_m}
\pmod{\Phi_n(q)}.
\end{equation}
Finally,
\begin{equation*}
-2m+4m(m+1)-m=m(4m+1)=mn,
\end{equation*}
so
\begin{equation}\label{eq:q-power-reduction}
q^{-2m+4m(m+1)}\equiv q^m\pmod{\Phi_n(q)}.
\end{equation}
Combining \eqref{eq:truncate}, \eqref{eq:lhs-reduced}, and \eqref{eq:rhs-reduced}, and then using \eqref{eq:q-power-reduction}, yields
\begin{equation}\label{eq:final-theorem-two}
\sum_{k=0}^{n-1}\frac{(q;q^4)_k^4}{(q^4;q^4)_k^4}q^{4k}
\equiv
q^m\frac{(q;q^2)_m^2(q^2;q^4)_m}
{(q^2;q^2)_m^2(q^4;q^4)_m}
\pmod{\Phi_n(q)}.
\end{equation}
Since $m=(n-1)/4$, \eqref{eq:final-theorem-two} is exactly \eqref{eq:theorem-two}. \qed

\end{document}